\documentclass{amsart}

\usepackage{amssymb}
\usepackage{hyperref}
\usepackage{tikz}

\usepackage{environ}

\usepackage{import}

\hyphenation{ho-meo-morph-ism}
\hyphenation{ho-meo-morph-isms}

\theoremstyle{plain}
\newtheorem{thm}{Theorem}[section]
\newtheorem{theorem}[thm]{Theorem}
\newtheorem*{theorem*}{Theorem}

\newtheorem{corollary}[thm]{Corollary}
\newtheorem{lemma}[thm]{Lemma}
\newtheorem*{lemma*}{Lemma}

\newtheorem{conjecture}[thm]{Conjecture}
\newtheorem*{conjecture*}{Conjecture}

\newtheorem*{fact*}{Fact}

\newtheorem*{question*}{Question}

\newtheorem*{maintheorem*}{Main Theorem}

\theoremstyle{remark}

\theoremstyle{definition}
\newtheorem{definition}[thm]{Definition}

\newtheorem*{standing hypothesis}{Standing Hypothesis}



\newcommand{\thmref}[1]{Theorem~\ref{#1}}

\newcommand{\corref}[1]{Corollary~\ref{#1}}

\newcommand{\lemref}[1]{Lemma~\ref{#1}}
\newcommand{\conjref}[1]{Conjecture~\ref{#1}}

\newcommand{\defref}[1]{Definition~\ref{#1}}


\newcommand{\defn}[1]{\emph{#1}}

\graphicspath{{pictures/}}


\newcommand{\N}{\mathbb{N}}

\newcommand{\R}{\mathbb{R}}

\newcommand{\into}{\hookrightarrow}
\renewcommand{\setminus}{\smallsetminus}


\DeclareMathOperator{\diam}{diam}


\newcommand{\setp}[2]{\left\{#1 \mid #2\right\}}
\newcommand{\abs}[1]{\left| #1 \right|}









\renewcommand{\epsilon}{\varepsilon}

\newcommand{\bdT}{\partial_T}
\DeclareMathOperator{\CAT}{CAT}
\newcommand{\dist}{d}

\title{Extending a rigidity result of Lytchak}

\author{Russell Ricks}

\subjclass[2010]{Primary 53C20, 53C24; Secondary 20F65}

\thanks{This material is based upon work supported by the National Science Foundation under Grant Number NSF 1045119.}

\begin{document}

\maketitle

\begin{abstract}
We extend a rigidity result of Alexander Lytchak by relaxing the requirement that the CAT(1) space be geodesically complete.
\end{abstract}

\section{Introduction}

The following rigidity theorem is due to Alexander Lytchak.

\begin{theorem} [Main Theorem of \cite{lytchak05}]		\label{lytchak rigidity}
Let $Y$ be a complete, geodesically complete, finite-dimensional $\CAT(1)$ space.
If $Y$ contains a proper, closed, involutive subset, then $Y$ is a spherical building or join.
\end{theorem}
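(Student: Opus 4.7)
The plan is to proceed by induction on the dimension of $Y$, using the involutive subset $A$ to force rigid local structure that assembles into a global building or join. Geodesic completeness is what guarantees that the extension operations implicit in the involutive condition are defined everywhere, and finite dimensionality provides the inductive framework. The base case is dimension $0$ or $1$, where such a $\CAT(1)$ space is either a finite set of antipodal points or a metric graph of circumference $\ge 2\pi$; in these settings, involutivity of a proper closed subset directly forces the building/join alternative.

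For the inductive step, fix $p \in A$ and pass to the space of directions $\Sigma_p Y$. This is again a complete, geodesically complete, finite-dimensional $\CAT(1)$ space, of strictly smaller dimension. The natural candidate for an inherited involutive subset is $\Sigma_p A$ (the directions from $p$ into $A$), and the first technical step is to verify that it is proper, closed, and involutive in the appropriate tangent sense inside $\Sigma_p Y$. Granted this, the inductive hypothesis gives that each such link is a spherical building or a spherical join.

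The heart of the argument is a local-to-global recognition step: promote the pointwise data that links over $A$ are buildings or joins to the conclusion that $Y$ itself is one. This should invoke known recognition theorems for spherical buildings in terms of their apartment systems and local combinatorics, together with the (easier) analogous recognition for spherical joins, combined with a gluing procedure that uses geodesic completeness to extend local chart data from $A$ out to all of $Y$.

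The main obstacle, I expect, is precisely this local-to-global step. Pointwise information must be organized coherently across $Y$: the dichotomy building versus join must be uniform, and the apartments or join factors detected in different links must be compatible. Both tasks lean heavily on the ability to extend geodesics through arbitrary points of $Y$ and to compare geodesics emanating from different basepoints, which is exactly what geodesic completeness provides. It is also precisely the hypothesis that the present paper seeks to relax, so the additional work of the extension will presumably consist in replacing these global extension arguments with ones relying only on the extensions that involutivity itself supplies.
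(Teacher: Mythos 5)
There is a mismatch of expectations here: the paper does not prove this statement at all. It is Lytchak's Main Theorem, quoted from \cite{lytchak05} and used as a black box in the proof of \thmref{main in body}; the entire content of the present paper is to weaken its hypotheses via the ultraproduct, not to reprove it. So there is no "paper's proof" to compare against, and your proposal has to be judged as a standalone proof of Lytchak's theorem.

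Judged that way, it is an outline with the decisive steps missing. First, the inductive transfer to the link is asserted but not justified, and it is genuinely problematic: if $p \in A$, the set $\Sigma_p A$ of directions from $p$ into $A$ need not be a \emph{proper} subset of $\Sigma_p Y$ (an involutive set can be topologically large near one of its points while still being proper in $Y$), and involutivity of $\Sigma_p A$ is a statement about pairs of directions at angle $\pi$, which does not follow formally from involutivity of $A$ (a statement about pairs of points at distance $\ge \pi$ in $Y$). Some substitute notion of "induced involutive structure on links" has to be defined and its properties proved; this is a real piece of work, not a routine verification. Second, you explicitly defer the local-to-global recognition step ("the heart of the argument") to unnamed "known recognition theorems," so the core of the proof is absent. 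Third, the base case is not as immediate as claimed: a one-dimensional, geodesically complete $\CAT(1)$ space with a proper closed involutive subset is a metric graph with all embedded circles of length $\ge 2\pi$, and showing that involutivity forces it to be a building (a graph with all circles of length exactly $2\pi$ and suitable thickness) or a join already requires an argument. As it stands the proposal identifies plausible ingredients but proves nothing; it also does not reflect how Lytchak actually argues (his proof analyzes the lattice of closed involutive subsets and associated distance functions rather than running a naive link induction of this shape).
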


One application of this theorem is to the study of complete CAT(0) spaces, the Tits boundaries of which are complete CAT(1) spaces, and finite-dimensional if the CAT(0) space is proper and cocompact%
.
Together with well-known results about $\CAT(0)$ spaces%
, \thmref{lytchak rigidity} gives us the following corollary.

\begin{corollary}						\label{lytchak corollary}
Let $X$ be a proper, cocompact $\CAT(0)$ space.
Assume the Tits boundary, $\bdT X$, of $X$ contains a proper, Tits-closed, involutive subset.
If both $X$ and $\bdT X$ are geodesically complete, then $X$ is a higher rank symmetric space or Euclidean building, or is a nontrivial product.
\end{corollary}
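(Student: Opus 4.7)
The plan is to apply \thmref{lytchak rigidity} directly to $\bdT X$ and then transfer the resulting dichotomy back to $X$ using standard structure results for proper cocompact CAT(0) spaces.

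First, I would verify the hypotheses of \thmref{lytchak rigidity} for $Y = \bdT X$. Properness of $X$ makes $\bdT X$ a complete $\CAT(1)$ space; properness and cocompactness of $X$ together imply that $\bdT X$ is finite-dimensional, by a theorem of Kleiner; geodesic completeness of $\bdT X$ is assumed; and the required proper closed involutive subset is given. So \thmref{lytchak rigidity} tells us that $\bdT X$ is either a spherical building or a nontrivial spherical join.

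Next I would split on this dichotomy. If $\bdT X$ is a nontrivial spherical join $A * B$, I would invoke the product splitting theorem for proper cocompact $\CAT(0)$ spaces (in the form due to Foertsch--Lytchak, building on earlier work of Leeb) to promote the Tits join decomposition to a metric product decomposition $X = X_A \times X_B$, giving the nontrivial product conclusion. If instead $\bdT X$ is a thick, irreducible spherical building of dimension at least one, I would apply Leeb's characterization of higher rank symmetric spaces and Euclidean buildings: a locally compact geodesically complete Hadamard space whose Tits boundary is such a building must itself be of one of these two types.

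The main obstacle will be handling the edge cases needed to feed the building case cleanly into Leeb's theorem. Specifically, one must check that a non-thick or reducible spherical building structure on $\bdT X$ is itself a nontrivial spherical join (and therefore reduces to the first case), and that a $0$-dimensional boundary possibility is excluded by the existence of a proper closed involutive subset together with geodesic completeness and cocompactness. Once these small reductions are in place, combining \thmref{lytchak rigidity} with the two $\CAT(0)$ transfer theorems yields the corollary.
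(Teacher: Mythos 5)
Your proposal matches the paper's intended argument exactly: the paper offers no written proof of this corollary, stating only that it follows from \thmref{lytchak rigidity} ``together with well-known results about $\CAT(0)$ spaces,'' and those results are precisely the ones you name (Kleiner's finite-dimensionality of $\bdT X$, the join-to-product splitting for geodesically complete locally compact Hadamard spaces, and Leeb's characterization via thick irreducible spherical buildings). Your handling of the non-thick/reducible and low-dimensional edge cases is the right bookkeeping, so the proposal is correct and takes essentially the same route.
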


Unfortunately, however, no conditions are known in general that guarantee the Tits boundary of a CAT(0) space is geodesically complete.
We therefore suggest the weaker condition of \defn{essentially $\pi$-geodesically complete}:  Instead of requiring that every geodesic segment be extendible to a geodesic line, we require that every geodesic segment of length less than $\pi$ be approximable by geodesic segments that are extendible to length $\pi$ (see \defref{egc} for a precise statement).

It is known \cite[Theorem C]{kleiner} that the boundary of every proper, cocompact CAT(0) space contains a round sphere (that is, a Euclidean sphere of same dimension as the ambient space).
In this paper, we prove that essential $\pi$-geodesic completeness, combined with the existence of a round sphere, is sufficient to obtain the conclusion of \thmref{lytchak rigidity}.

\begin{theorem} [\thmref{main in body}]				\label{main in intro}
Let $Y$ be a complete $\CAT(1)$ space that contains a round sphere.
Assume $Y$ also contains a proper, closed, involutive subset.
If $Y$ is essentially $\pi$-geodesically complete, then $Y$ is a spherical building or join.
\end{theorem}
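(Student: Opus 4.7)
The plan is to deduce Theorem~\ref{main in intro} from Lytchak's Theorem~\ref{lytchak rigidity} by upgrading the hypotheses in a controlled way. The weakening of geodesic completeness is compensated for by the round sphere, so my overall strategy is to use the round sphere to (i)~supply finite-dimensionality for free and (ii)~bootstrap essential $\pi$-geodesic completeness into genuine geodesic completeness (or something strong enough that Lytchak's arguments apply verbatim).

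First I would dispose of the finite-dimensionality hypothesis. A round $n$-sphere $S\subset Y$ is isometrically embedded and, by CAT(1) comparison combined with Kleiner's theorem \cite{kleiner}, pins down the geometric dimension of $Y$ as exactly~$n$. So finite-dimensionality is automatic and does not need to be assumed.

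Second I would promote \emph{essentially} $\pi$-geodesically complete to actual $\pi$-geodesic completeness. Given a geodesic segment $\gamma\colon[0,\ell]\to Y$ with $\ell<\pi$, \defref{egc} supplies segments $\gamma_n\to\gamma$ that each extend to length $\pi$; call the extensions $\tilde\gamma_n\colon[0,\pi]\to Y$. By the CAT(1) inequality on the triangles formed by nearby extensions, the family $\{\tilde\gamma_n\}$ is equicontinuous, and Arzel\`a--Ascoli together with completeness of $Y$ yields a subsequential limit $\tilde\gamma\colon[0,\pi]\to Y$ extending $\gamma$. The round sphere then provides a reservoir of length-$\pi$ geodesics along which one can propagate extendibility further, using the antipodal map on $S$ as an anchor.

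Third, and this is where I expect the main obstacle, I would reconcile the resulting extension property with Lytchak's actual usage of geodesic completeness. Lytchak's proof uses geodesic completeness to produce antipodes and to perform local surgeries around the involutive subset; if those uses only require length-$\pi$ extendibility at \emph{dense} points (and the round sphere to ensure abundance of antipodes), then one applies his theorem directly. Otherwise, I would pass to a completion/saturation $\hat Y\supseteq Y$ obtained by iteratively adjoining limits of extended geodesics, prove that $\hat Y$ remains CAT(1), inherits the proper closed involutive subset (this inheritance is the delicate point, since the involutive subset must still be proper after saturation), and is genuinely geodesically complete; then Lytchak gives that $\hat Y$ is a spherical building or join, and I would finally argue $Y=\hat Y$ (or that the structure restricts to $Y$). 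Threading the needle between "enough extendibility to run Lytchak" and "preserving properness of the involutive subset under extension" is the core technical issue.
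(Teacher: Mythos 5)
Your second step is where the argument breaks down. Arzel\`a--Ascoli requires pointwise relative compactness of the family $\{\tilde\gamma_n\}$, and $Y$ is only assumed complete, not proper or locally compact (Tits boundaries of proper cocompact CAT(0) spaces, the motivating examples, are typically not locally compact). So the subsequential limit $\tilde\gamma$ need not exist in $Y$, and essential $\pi$-geodesic completeness genuinely does not upgrade to actual extendibility inside $Y$ itself; if it did, the new definition would be pointless. The ``completion/saturation $\hat Y$'' you gesture at in your third step is the right instinct, but you leave it entirely unconstructed, and it is in fact the whole proof: the paper realizes it concretely as the ultraproduct $Y^\omega$. \lemref{ultralimit g.c.} shows $Y^\omega$ is geodesically complete when $Y$ is essentially $\pi$-geodesically complete (ultralimits exist with no compactness hypothesis, which is precisely why one passes to $Y^\omega$ rather than taking limits in $Y$), \lemref{involutive carry-over} shows $A^\omega$ remains proper, closed, and involutive in $Y^\omega$ (the delicate point there is involutivity rather than properness: an antipode in $Y^\omega$ of a point of $A^\omega$ must be exhibited as an ultralimit of points of $A$, which again uses essential $\pi$-geodesic completeness), and then Lytchak's theorem applies to $Y^\omega$.

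The second gap is the descent: you have no mechanism for transferring the conclusion back from the saturation to $Y$. One cannot ``argue $Y=\hat Y$'' --- $Y\neq Y^\omega$ in general, and no geodesically complete enlargement of a non-locally-compact $Y$ will coincide with $Y$. This is where the round sphere actually earns its keep, not in ``propagating extendibility'': $e(Y)$ is a $\pi$-convex subset of $Y^\omega$ containing a round sphere of $Y^\omega$ (since $\dim Y=\dim Y^\omega$), and \lemref{subjoin} and \lemref{subbuilding} show that such a subset of a spherical join (resp.\ building) is itself a spherical join (resp.\ building). Without the round sphere this pullback fails. So both halves of your outline --- the upgrade and the descent --- are stated as aspirations rather than proved, and the missing arguments constitute essentially the entire content of the paper's proof.
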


As with \corref{lytchak corollary}, we immediately obtain the following corollary.

\begin{corollary}						\label{main cor}
Let $X$ be a proper, cocompact $\CAT(0)$ space.
Assume the Tits boundary, $\bdT X$, of $X$ contains a proper, Tits-closed, involutive subset.
If $X$ is geodesically complete, and $\bdT X$ is essentially $\pi$-geodesically complete, then $X$ is a higher rank symmetric space or Euclidean building, or is a nontrivial product.
\end{corollary}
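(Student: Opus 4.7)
The plan is to imitate the derivation of Corollary~\ref{lytchak corollary} from Theorem~\ref{lytchak rigidity}, replacing Lytchak's rigidity input with the strengthened Theorem~\ref{main in intro}. The Tits boundary $\bdT X$ of a proper cocompact $\CAT(0)$ space is automatically a complete $\CAT(1)$ space, so the hypotheses of Theorem~\ref{main in intro} are almost all in place: completeness is free, the existence of a proper Tits-closed involutive subset is given, and essential $\pi$-geodesic completeness of $\bdT X$ is given. The only remaining hypothesis is the presence of a round sphere, but this is exactly the content of \cite[Theorem~C]{kleiner}. Applying Theorem~\ref{main in intro} to $Y = \bdT X$ therefore yields the structural dichotomy that $\bdT X$ is either a spherical building or a (nontrivial) spherical join.

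From here I would feed the conclusion into the well-known $\CAT(0)$ boundary rigidity machinery. If $\bdT X$ splits as a spherical join $\bdT X = A * B$, then, because $X$ is proper, cocompact, and geodesically complete, a join decomposition of the Tits boundary upgrades to a metric product decomposition of $X$ itself; this is a standard consequence of the join detection/splitting theorems available in this setting (e.g.\ the results used in the proof of Corollary~\ref{lytchak corollary}, together with the fact that cocompactness plus geodesic completeness of $X$ makes the boundary join visible inside $X$). If instead $\bdT X$ is a spherical building, then the higher rank rigidity theorems for $\CAT(0)$ spaces with cocompact isometry group (Ballmann--Burns--Spatzier, Leeb, Kleiner--Leeb) identify $X$ as a higher rank Riemannian symmetric space or a Euclidean building.

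The main obstacle, and the only place where the proof is not a one-line invocation of Theorem~\ref{main in intro}, is justifying the last step without importing additional assumptions. The target statement allows \emph{all three} possibilities (nontrivial product, symmetric space, or building), which mirrors the trichotomy produced by the building/join dichotomy on $\bdT X$; the delicate point is ensuring that the product decomposition one extracts from a Tits join is genuine, i.e.\ does not degenerate, and that the building case actually forces the higher rank rigidity hypothesis rather than merely an abstract buildinglike structure on the boundary. Both of these are handled in the same way as in Lytchak's original corollary, since geodesic completeness of $X$ (not of $\bdT X$) is the hypothesis really used there, and this hypothesis is preserved in our setting.

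In short, the proof reduces to verifying the four bullet hypotheses of Theorem~\ref{main in intro} for $Y = \bdT X$ (three of which are given and the fourth of which is Kleiner's round-sphere theorem) and then quoting the standard boundary-to-interior rigidity results. No new geometric argument beyond Theorem~\ref{main in intro} itself is required.
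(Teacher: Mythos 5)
Your proposal matches the paper's intended derivation exactly: the paper states that \corref{main cor} follows ``as with \corref{lytchak corollary}'' by applying \thmref{main in intro} to $Y = \bdT X$ (with the round sphere supplied by \cite[Theorem~C]{kleiner}) and then invoking the standard splitting and higher rank rigidity results for proper, cocompact, geodesically complete $\CAT(0)$ spaces. Your verification of the hypotheses and the handling of the building/join dichotomy are correct and add useful detail to what the paper leaves implicit.
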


Naturally, one wonders how often \thmref{main in intro} applies.
We give one setting.

\begin{theorem}	[\thmref{one-dim in body}]			\label{one-dim in intro}
Let $Y$ be a complete $\CAT(1)$ space that contains a round sphere, and assume every pair of points in $Y$ is connected by a geodesic of length at most $\pi$.
If $Y$ has geometric dimension $\dim Y = 1$, then $Y$ is essentially $\pi$-geodesically complete.
\end{theorem}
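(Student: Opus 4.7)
Let $\sigma : [0, \ell] \to Y$ be a geodesic of length $\ell < \pi$ and let $S \subset Y$ denote the round sphere. The hypothesis that every pair of points is joined by a geodesic of length at most $\pi$ forces $\diam Y \leq \pi$, and the goal is to approximate $\sigma$, in the sense of \defref{egc}, by geodesics extendible to length exactly $\pi$. The guiding idea is to exploit $S$ as an extension reservoir: because $S$ is a closed geodesic of length $2\pi$, any geodesic that reaches $S$ with a compatible direction can be prolonged along $S$ up to length $\pi$.

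First I would use $\dim Y = 1$ to conclude that the space of directions $\Sigma_p Y$ is $0$-dimensional at every $p \in Y$, so that $Y$ has the local structure of a metric graph. Under the CAT($1$) hypothesis, every embedded loop has length at least $2\pi$, and a short comparison computation shows that the angle between any two distinct outgoing directions at a branch point must equal $\pi$. Consequently, geodesic extension through a branch point always succeeds --- there is always an antipodal outgoing direction --- and the only obstructions to continuing a geodesic are the \emph{terminal} points: those $p \in Y$ with $|\Sigma_p Y| \leq 1$.

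Given this reduction, the construction of the approximating sequence is as follows. At each endpoint of $\sigma$, I would extend the geodesic maximally. If the combined extension reaches length $\pi$, there is nothing to approximate. Otherwise the maximal extension terminates at a terminal point $p$, and I would perturb $\sigma$ to a nearby geodesic whose corresponding endpoint lies on an adjacent non-terminating edge, using the discreteness of $\Sigma_p Y$ and the graph-like picture to keep the perturbation arbitrarily small. Once the perturbed geodesic reaches $S$, a final extension along $S$ supplies whatever additional length is needed to reach $\pi$.

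The main obstacle is controlling the perturbations: I must show that the extension-obstruction locus is sparse enough that the perturbed geodesics stay arbitrarily close to $\sigma$, and also that each perturbed geodesic can in fact be extended all the way to $S$ within the available length budget of $\pi$. Here both hypotheses enter crucially: the diameter bound restricts the length of any ``pendant'' arc terminating at a dead-end (and so controls how far into $Y$ the obstruction locus can reach), while the round-sphere hypothesis provides the global backbone along which the final extension to length $\pi$ is automatic.
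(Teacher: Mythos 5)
Your structural setup is sound and coincides with where the paper starts: since $\dim Y = 1$, the space of directions at each point is a $0$-dimensional $\CAT(1)$ space, so distinct directions make angle exactly $\pi$, concatenations of geodesics arriving at a point from distinct directions are local geodesics, and local geodesics of length at most $\pi$ in a $\CAT(1)$ space are genuine geodesics. But the plan stops exactly where the real work begins. The step you yourself flag as ``the main obstacle'' --- showing the obstruction locus is sparse enough that an arbitrarily small perturbation of $[x,y]$ escapes it --- is the entire content of the theorem, and nothing in your outline supplies it. A priori the maximal extension of $[x,y]$ could die at the end of a pendant arc whose unique attaching branch point sits at a definite distance from $y$; then no geodesic from $x$ passing within $\epsilon$ of $y$ branches off the obstructed route, and your scheme produces nothing. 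You also do not justify why a perturbed geodesic should reach the round sphere \emph{tangentially} so that it can be prolonged along it; that is a strong requirement, and the ``extension reservoir'' role you assign to $S$ is not what makes the proof work.

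The paper closes the gap with an ingredient absent from your plan: by \cite[Lemma 3.1]{bl-centers}, \emph{every} point of $Y$ has an antipode in the round sphere $K$. Setting $\alpha = \tfrac{1}{3} d(x,y)$ and $A = \{ p \in K : d(y,p) \ge \pi - \alpha \}$ (nonempty by that lemma), one takes minimizing geodesics $[y,p]$ for $p \in A$ --- this is where the hypothesis that every pair of points is joined by a geodesic of length at most $\pi$ enters, not as a bound on pendant arcs. Each such $[p,y]$ must share a terminal subsegment $[q,y]$ with $[x,y]$, since otherwise the concatenation $[x,y]*[y,p]$ would already extend $[x,y]$ to length $\pi$. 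If every $p \in A$ shared a subsegment of length at least $\epsilon$, the common point $q$ at distance $\epsilon$ from $y$ would satisfy $d(p,q) < \pi$ for all $p \in K$ (subtracting $\epsilon$ along $[p,y]$ when $p \in A$, and using the triangle inequality when $p \in K \setminus A$), contradicting the antipode property of $K$. Hence some $p \in A$ branches off within $\epsilon$ of $y$, and $[x,q]*[q,p]$ is a local geodesic of length at least $\pi$ passing within $\epsilon$ of $y$ --- no further extension along $K$ is needed. Unless you can replace this antipode argument with something of comparable force, your proposal remains an outline whose central claim is unproved.
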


We remark that the hypothesis that every pair of points be connected by a geodesic of length at most $\pi$ is necessary to prevent the following pathology:
Any $\CAT(1)$ space can be made to fail essential $\pi$-geodesic completeness, simply by attaching a single geodesic arc to the space at one point.
However, this cannot be done without violating the stated hypothesis (as long as there is a round sphere \cite[Lemma 3.1]{bl-centers}).
Moreover, if \thmref{main in intro} holds, then so does this hypothesis.

We believe it likely that \thmref{one-dim in intro} holds in every dimension, provided $Y$ is not a spherical join.
If such a generalization is correct, we suspect one could then extend the results of \cite{ricks-onedim} to every dimension.

\section{Preliminaries}
\label{sec:prelims}

Let $X$ be a metric space.
A \defn{geodesic segment} (or \defn{geodesic}) in $X$ is an isometric embedding $\sigma \colon [0,r] \to X$ for some $r > 0$.
The space $X$ is called \defn{geodesic} if every pair of distinct points $x,y \in X$ can be joined by a geodesic in $X$;
it is called \defn{$\pi$\nobreakdash-geodesic} if every pair of distinct points $x,y \in X$ of distance less than $\pi$ can be joined by a geodesic in $X$;
it is called \defn{geodesically complete} if every geodesic segment in $X$ can be extended to a locally isometric embedding of $\R$ into $X$.

\begin{definition}						\label{egc}
Call a $\pi$-geodesic space $Y$ \defn{essentially $\pi$-geodesically complete} if, for every pair of distinct points $x, y \in Y$ with $d(x,y) < \pi$, and every $\epsilon > 0$, there is a point $z \in Y$ and a geodesic $[x, z] \subset Y$ from $x$ to $z$ of length $\pi$, such that the metric distance $d(y, [x, z])$ from $y$ to $[x, z]$ satisfies $d(y, [x, z]) < \epsilon$.
\end{definition}

A \defn{CAT(1) space} is a $\pi$-geodesic metric space in which distances between points on sides of geodesic triangles of perimeter less than $2\pi$ are never greater than in the corresponding comparison triangles in the Euclidean plane.
We assume the reader is familiar with CAT(0) and CAT(1) spaces (a good reference is \cite{bridson}).

We will write $X * Y$ for the \defn{spherical join} of $\CAT(1)$ spaces $X$ and $Y$.
More precisely, $X * Y$ is the $\CAT(1)$ space characterized by the following three properties:
\begin{enumerate}
\item  $X$ and $Y$ are disjoint, convex subspaces of $X * Y$ (up to isometry).
\item  $d(x,y) = \frac{\pi}{2}$ for all $x \in X$ and $y \in Y$.
\item  For every $z \in X * Y$, there exist $x \in X$, $y \in Y$ such that $d(x,z) + d(z,y) = \frac{\pi}{2}$.
\end{enumerate}

\begin{standing hypothesis}
For the rest of this paper, $Y$ will denote a complete $\CAT(1)$ space.
\end{standing hypothesis}

We will write $\dim Y$ for the \defn{geometric dimension} of $Y$ defined by Kleiner \cite{kleiner}.
Call a subset $K \subset Y$ a \defn{round sphere} if it is isometric to a standard Euclidean sphere of radius one and $\dim K = \dim Y$.
Note that the Tits boundary of every proper, cocompact $\CAT(0)$ space contains a round sphere \cite[Theorem C]{kleiner}.

Call the point $q \in Y$ is an \defn{antipode} of the point $p \in Y$ if $d(p,q) \ge \pi$, and call the set $A \subset Y$ \defn{involutive} if it contains every antipode of every point in $A$.

As in \cite{lytchak05}, an important tool is the ultraproduct%
\footnote{Usually one specifies a basepoint in the construction of the ultraproduct.
We do not need this flexibility, so we simplify by removing this choice from the definition.}
of a $\CAT(1)$ space $Y$.
Fix an \defn{ultrafilter} $\omega$ on $\N$; this is a finitely-additive probability measure on $\N$ such that every subset $A$ of $\N$ is measurable with measure either $0$ or $1$.
We require that $\omega$ be \defn{non-principal}; that is, that $\omega(F) = 0$ for every finite subset $F \subset \N$.
Now given a bounded sequence $(t_n)$ of real numbers, $\omega$ describes a unique \defn{ultralimit} $\lim^\omega t_n$ such that for every $\epsilon > 0$, we have $\abs{t_n - \lim^\omega t_n} < \epsilon$ for $\omega$-almost every $n \in \N$.
Take the set of all sequences $(y_n)$ in $Y$, with the pseudometric $d((p_n),(q_n)) = \lim^\omega d(p_n,q_n)$; the complete metric space $Y^\omega$, formed by taking equivalence classes of distance-zero sequences, is called the \defn{ultraproduct} of $Y$.
For a subset $A \subset Y$, we also define $A^\omega \subset Y^\omega$ to be the closed set $A^\omega = \setp{(a_n) \in Y^\omega}{a_n \in A \text{ for all } n \in \N}$.

Now $\dim Y = \dim Y^\omega$ for any $\CAT(1)$ space $Y$ \cite[Corollary~11.2]{lytchak05}; in particular, if $K$ is a round sphere in $Y$ then so is $K^\omega$ in $Y^\omega$.

There is also a canonical isometric embedding $e \colon Y \into Y^\omega$, mapping each point to the equivalence class of the corresponding constant sequence, $y \in Y \mapsto (y)_n \in Y^\omega$.
Since $Y$ is $\pi$-geodesically complete, $e(Y)$ is $\pi$-convex.

A simple but important observation is the following.

\begin{lemma}							\label{ultralimit g.c.}
Let $Y$ be a complete $\CAT(1)$ space.
If $Y$ is essentially $\pi$-geodesically complete, then $Y^\omega$ is geodesically complete.
\end{lemma}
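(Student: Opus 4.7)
The plan is to reduce geodesic completeness of $Y^\omega$ to a local extension statement: every geodesic segment in $Y^\omega$ of length less than $\pi$ extends to a geodesic of length $\pi$ in $Y^\omega$. Granted this, standard CAT(1) machinery (uniqueness of geodesics of length less than $\pi$, completeness of $Y^\omega$, and a maximality argument on extensions) then yields the locally isometric embedding of $\R$ required by the definition of geodesic completeness: a maximal extension cannot be bounded on either side because completeness of $Y^\omega$ lets any $1$-Lipschitz curve on a half-open interval close up at the limit point, after which local extension gives a contradiction to maximality. For geodesic segments of length at least $\pi$, apply the local extension statement to a subsegment of length $\pi/2$ at each endpoint and use short-geodesic uniqueness to verify the extension is consistent with the original.

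To establish the local extension, let $\sigma \colon [0, r] \to Y^\omega$ be a geodesic with $0 < r < \pi$ and endpoints $p = [(p_n)]$ and $q = [(q_n)]$. For $\omega$-almost every $n$ we have $p_n \ne q_n$ and $d(p_n, q_n) < \pi$, so \defref{egc} applied to $(p_n, q_n)$ with tolerance $1/n$ produces a geodesic $\sigma_n \colon [0, \pi] \to Y$ with $\sigma_n(0) = p_n$ together with some $t_n \in [0, \pi]$ (a closest-point parameter, existing by compactness of the image of $\sigma_n$) satisfying $d(q_n, \sigma_n(t_n)) < 1/n$; the isometry of $\sigma_n$ forces $|t_n - d(p_n, q_n)| < 1/n$, so $\lim^\omega t_n = r$. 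Define $\tilde\sigma \colon [0, \pi] \to Y^\omega$ by $\tilde\sigma(t) = [(\sigma_n(t))_n]$. Each $\sigma_n(t)$ lies within $\pi$ of $p_n$, so the sequence is admissible, and the relation $d(\sigma_n(s), \sigma_n(t)) = |s - t|$ passes to the ultralimit, so $\tilde\sigma$ is a geodesic of length $\pi$ starting at $p$. The triangle inequality gives $d(q_n, \sigma_n(r)) \le 1/n + |r - t_n|$, whose ultralimit is $0$, so $\tilde\sigma(r) = q$.

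Since $d(p, q) = r < \pi$, uniqueness of geodesics of length less than $\pi$ in the CAT(1) space $Y^\omega$ then forces $\tilde\sigma|_{[0, r]} = \sigma$, so $\tilde\sigma$ is an extension of $\sigma$ past $q$ by length $\pi - r > 0$; applying the same argument to the reversed geodesic extends past $p$. The key conceptual move is that the ultralimit converts the ``$\epsilon$-close to $q$'' behavior of the $\sigma_n$ into the exact equality $\tilde\sigma(r) = q$, so the approximate extensions guaranteed by essential $\pi$-geodesic completeness become honest extensions in $Y^\omega$. I do not anticipate a serious obstacle here; the only bookkeeping is ensuring $\omega$-fullness of the index set on which the $\sigma_n$ are constructed (clear from $\lim^\omega d(p_n,q_n) = r \in (0, \pi)$), and appealing to CAT(1) uniqueness to pin the extension down to $\sigma$.
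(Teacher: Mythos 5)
Your proof is correct and follows essentially the same route as the paper: both take the ultralimit of the length-$\pi$ geodesics $[p_n,z_n]$ supplied by essential $\pi$-geodesic completeness with tolerance $1/n$, and observe that the ``within $1/n$ of $q_n$'' condition becomes exact passage through $q$ in $Y^\omega$. The paper's version is terser (it works directly with a pair of points and leaves implicit both the identification with the given segment via CAT(1) uniqueness and the passage from length-$\pi$ extendibility to a full local isometric embedding of $\R$), whereas you spell out those routine steps.
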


\begin{proof}
Let $x,y \in Y^\omega$.
Choose representative sequences $(x_n)$ and $(y_n)$ in $Y$ for $x$ and $y$, respectively.
For each $n \in \N$, find a geodesic $[x_n,z_n]$ in $Y$ of length $\pi$ such that $d(y_n, [x_n,z_n]) \le \frac{1}{n}$.
Then the ultralimit $[x,z] \subset Y^\omega$ of the geodesics $[x_n,z_n]$ is also a geodesic of length $\pi$ and satisfies $d(y, [x,z]) = 0$.
\end{proof}

\section{Proof of main theorem}
\label{sec:proof}

\begin{lemma} 							\label{subjoin}
Let $Y$ be a complete $\CAT(1)$ space that splits as a spherical join $Y = A * B$.
Let $S \subset Y$ be a $\pi$-convex subset of $Y$ that contains a round sphere $K$ of $Y$.
Then $S$ splits as a spherical join $S = A' * B'$, where $A' = A \cap S$ and $B' = B \cap S$.
\end{lemma}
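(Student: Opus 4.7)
My plan is to exploit the join structure of $Y$ to decompose $K$ first, then transfer the decomposition to $S$ via $\pi$-convexity. Concretely, I would start by showing the round sphere inherits the join decomposition of $Y$: setting $K_A := K \cap A$ and $K_B := K \cap B$, I would verify $K = K_A * K_B$, with $K_A$ and $K_B$ round subspheres of dimensions $\dim A$ and $\dim B$ respectively. This uses that $A$ and $B$ are $\pi$-convex in $Y$ (so $K_A$ and $K_B$ are $\pi$-convex subsets of the round sphere $K$), together with the dimension identity $\dim K = \dim Y = \dim A + \dim B + 1$ and the uniqueness of the join decomposition applied to each point of $K$.

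Since $K \subset S$, one then obtains $K_A \subset A' := A \cap S$ and $K_B \subset B' := B \cap S$. Each of $A'$ and $B'$ is $\pi$-convex as an intersection of $\pi$-convex sets, and both are non-empty. The inclusion $A' * B' \subseteq S$ is straightforward: for any $a \in A'$ and $b \in B'$, the join geodesic $[a,b]$ in $Y$ has length $\pi/2 < \pi$, so $\pi$-convexity of $S$ forces $[a,b] \subset S$, and every point of $A' * B'$ realized inside $Y$ lies in $S$.

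The heart of the argument is the reverse inclusion $S \subseteq A' * B'$. Given $z \in S \setminus (A \cup B)$ with its unique join decomposition $z = [a,b]_t$ and $0 < t < \pi/2$, I must exhibit $a, b \in S$. The key geometric observation is that inside the subspace $\{a\} * \{b, b_0\} \subset Y$, whenever $b_0 \in B$ satisfies $d(b, b_0) = \pi$, the unique geodesic $[z, b_0]$ in $Y$ has length $\pi/2 + t < \pi$ and passes through $a$; hence if such an antipode $b_0$ can be found inside $B'$, then $\pi$-convexity of $S$ immediately yields $a \in [z, b_0] \subset S$. When $b \in K_B$, its antipode in the round sphere $K_B \subset B'$ furnishes the required point, and the symmetric argument handles the case $a \in K_A$.

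The main obstacle, which I expect to be the most delicate step, is the case $b \notin K_B$ (and $a \notin K_A$), where $b$ need not possess any antipode inside $K_B$. To resolve this, I would bootstrap from the round sphere: starting with $K_A \subset A'$ and $K_B \subset B'$, iteratively enlarge both sets using the antipode-through-apex construction above, leveraging the full-dimensionality of $K_A$ in $A$ and $K_B$ in $B$ together with the $\pi$-convexity of $S$ to close up. Alternatively, one could try to show directly that the $\pi$-convex hull of $K \cup \{z\}$ inside $Y$ already splits as a sub-join of $A * B$ passing through $z$, which would locate $a$ and $b$ in $S$ in a single step.
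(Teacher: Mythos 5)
Your core mechanism is the right one, and it is the same one the paper uses: given a point of $S$ with join coordinates $a \in A$, $b \in B$, concatenate the join geodesic through that point with the segment running from one coordinate to an antipode of the \emph{other} coordinate lying in $S$; the resulting geodesic has length less than $\pi$, so $\pi$-convexity of $S$ captures the coordinate sitting in its interior. But the proposal has a genuine gap precisely where you flag it: you can only produce the needed antipode inside $S$ when the relevant coordinate already lies in the round sphere, and the ``bootstrap'' and ``$\pi$-convex hull'' suggestions for the general case are not arguments. The missing ingredient is \cite[Lemma 3.1]{bl-centers}, which the paper invokes at exactly this point: \emph{every} point of $Y$ (not merely every point of $K$) has an antipode in the round sphere $K$. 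Since a join factor contains all antipodes of its points (immediate from the spherical join distance formula), the antipode in $K$ of an arbitrary $a \in A$ automatically lies in $A \cap K \subset A \cap S = A'$, and symmetrically for $b \in B$. With this, your ``antipode-through-apex'' step applies to every $y \in S \setminus (A' \cup B')$ with no case split, and your preliminary claim $K = K_A * K_B$ becomes unnecessary --- which is just as well, since that claim is itself an instance of the lemma being proved (applied to $S = K$), and the sketch you give for it ($\pi$-convexity of $K_A$ and $K_B$ plus a dimension count) does not by itself show that the join coordinates of a point of $K$ lie in $K$.

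One further loose end: after handling $y \notin A' \cup B'$ you must still treat $y \in A' \cup B'$, for which it suffices that both $A'$ and $B'$ are nonempty. The paper obtains this from \cite[Lemma~11.3]{lytchak05}, rather than from your unproved assertion that $K_A$ and $K_B$ are nonempty round subspheres.
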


\begin{proof}
Let $y \in S \smallsetminus (A' \cup B')$.
There exist $a \in A$ and $b \in B$ such that $y$ belongs to the geodesic segment $[a,b]$ in $Y$, and $\dist(a,y) + \dist(y,b) = \frac{\pi}{2}$.
Since $y \notin A' \cup B'$, both $\dist(a,y), \dist(y,b) > 0$.
By \cite[Lemma 3.1]{bl-centers}, we may find $a' \in K$ such that $\dist(a,a') = \pi$.
Notice that $a' \in A$ because join factors contain all their own antipodes, hence $a' \in A'$.
Thus $\dist(b,a') = \frac{\pi}{2}$.
Hence $\dist(a,a') = \pi = \dist(a,y) + \dist(y,b) + \dist(b,a')$, and thus we have a geodesic segment from $a$ to $a'$ in $Y$ passing through $b$ and $y$.
By $\pi$-convexity, the geodesic segment $[y,a']$ lies completely in $S$; in particular, $b \in B'$.
A symmetric argument shows $a \in A'$.
We conclude that for every $y \in S \smallsetminus (A' \cup B')$ there exist $a \in A'$ and $b \in B'$ such that $\dist(a,y) + \dist(y,b) = \frac{\pi}{2}$.
By \cite[Lemma~11.3]{lytchak05}, $A'$ and $B'$ are nonempty, so our conclusion extends to every $y \in S$.
Thus $S = A' * B'$.
\end{proof}

We recall here two results about spherical buildings.

\begin{lemma} [Corollary 3.5.2 and Proposition 3.10.3 of \cite{kleiner-leeb97}]					\label{subbuilding}
Let $Y$ be a spherical building.
Let $S \subset Y$ be a $\pi$-convex subset of $Y$ that contains a round sphere of $Y$.
Then $S$ is also a spherical building.
\end{lemma}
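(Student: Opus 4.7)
My plan is to reduce the statement to the cited results of Kleiner and Leeb, \cite[Corollary~3.5.2 and Proposition~3.10.3]{kleiner-leeb97}, rather than to reprove it. To indicate why it is true, I would sketch the following geometric argument, which mirrors the strategy used for \lemref{subjoin}.

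The strategy is to verify the spherical building axioms on $S$ using the building structure inherited from $Y$. The natural candidates for the apartments of $S$ are precisely those apartments of $Y$ that happen to lie inside $S$. This collection is nonempty: since $\dim K = \dim Y$, the round sphere $K \subset S$ is already a top-dimensional apartment of $Y$ contained in $S$.

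The crucial step is to show that every $x \in S$ lies in at least one apartment of $Y$ contained in $S$. Given such $x$, I would invoke \cite[Lemma~3.1]{bl-centers}, exactly as in the proof of \lemref{subjoin}, to produce an antipode $x' \in K$. In a spherical building, an apartment through the antipodal pair $x, x'$ may be described as a union of geodesic segments of length $\pi$ from $x$ to $x'$, each passing through a point of a wall in $K$; by $\pi$-convexity of $S$, every such segment lies in $S$, and hence so does the whole apartment. Once this is in place, the remaining building axioms -- any two points of $S$ lie in a common apartment, compatibility of retractions onto apartments, etc. -- transfer from $Y$ to $S$, because intersections of apartments of $Y$ that sit inside $S$ remain $\pi$-convex in $S$.

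The main obstacle is the step of producing an apartment of $Y$ inside $S$ through an arbitrary point of $S$. Both hypotheses of the lemma are used here: the round sphere $K$ anchors the apartment construction by supplying antipodes, while $\pi$-convexity propagates this construction throughout $S$. Without either hypothesis the statement fails, since one could attach a stray geodesic arc to $Y$ at a single point and destroy the building structure on $S$.
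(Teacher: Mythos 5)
The paper offers no proof of this lemma at all: it is stated as a direct citation of Corollary~3.5.2 and Proposition~3.10.3 of \cite{kleiner-leeb97}, so your decision to reduce to those results rather than reprove them is exactly the paper's approach. Your supplementary sketch correctly isolates the two ingredients those citations supply (the top-dimensional round sphere $K$ is an apartment, and a $\pi$-convex subset containing an apartment is a subbuilding), though your description of an apartment through an antipodal pair as a union of length-$\pi$ segments through a wall of $K$ is looser than the actual argument and should not be mistaken for a complete verification of the building axioms.
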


\begin{lemma} [Lemma 11.5 of \cite{lytchak05}]			\label{lytchak buildings}
Let $Y$ be a geodesically complete $\CAT(1)$ space.
Then $Y$ is a spherical building if and only if $Y^\omega$ is a spherical building.
\end{lemma}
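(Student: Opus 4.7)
The plan is to exploit the isometric, $\pi$-convex embedding $e\colon Y \into Y^\omega$ together with the dimension equality $\dim Y = \dim Y^\omega$, so that round spheres of full dimension (the apartments, in the building setting) move back and forth between $Y$ and $Y^\omega$.

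For the forward direction, suppose $Y$ is a spherical building of dimension $d$. Given $x, y \in Y^\omega$ with representative sequences $(x_n), (y_n) \subset Y$, choose for each $n$ an apartment $A_n \cong S^d$ of $Y$ with $x_n, y_n \in A_n$. Because each $A_n$ is a compact round $d$-sphere, the ultralimit $A_n^\omega$ is again isometric to $S^d$, hence a round sphere of $Y^\omega$ containing $x$ and $y$. So every pair of points of $Y^\omega$ lies in a round sphere, and I would promote this family to a compatible apartment system by taking ultralimits of the wall and chamber decomposition of each $A_n$, governed by the finite Weyl group of the building. Since the Weyl group is fixed independent of $n$, the combinatorial axioms for a spherical building transfer directly.

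For the reverse direction, suppose $Y^\omega$ is a spherical building. Since $Y$ is geodesically complete, $e(Y)$ is $\pi$-convex in $Y^\omega$. The strategy is to apply \lemref{subbuilding} to $e(Y) \subset Y^\omega$: if I can exhibit a round sphere of $Y^\omega$ that lies inside $e(Y)$, then $e(Y)$ is itself a spherical building, hence so is $Y$. To produce such a round sphere in $e(Y)$, I would combine geodesic completeness of $Y$ with the dimension equality $\dim Y = \dim Y^\omega$: starting from any $p \in Y$ and any apartment $A \subset Y^\omega$ through $e(p)$, I would iteratively extend geodesics in $Y$ matching the directions determined by $A$ to within arbitrary $\epsilon$ and take $\pi$-convex closure, with the dimension equality preventing the closure from extending beyond a $d$-sphere.

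The hard part is the step in the reverse direction of transferring apartment structure inward from $Y^\omega$ to $Y$; building structure moves outward to ultraproducts naturally via ultralimits of apartments, but pulling it back requires delicately combining geodesic completeness of $Y$ with dimension control. The forward direction is by contrast mostly combinatorial bookkeeping, since the finite Weyl group and chamber structure of the building $Y$ pass cleanly through the ultralimit to endow $Y^\omega$ with an apartment system satisfying the building axioms.
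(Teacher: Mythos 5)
This lemma is imported verbatim from \cite{lytchak05} (Lemma 11.5); the paper gives no proof of its own, so your attempt can only be judged on its merits, and both directions have genuine gaps. In the forward direction the geometric half is fine: since apartments are compact and $\dim Y = \dim Y^\omega$, ultralimits of apartments are round spheres of $Y^\omega$, so every pair of points of $Y^\omega$ lies in a round sphere. But the step you dismiss as ``combinatorial bookkeeping'' is the actual content. Your apartments of $Y^\omega$ are indexed by \emph{sequences} of apartments of $Y$, and the building axioms are compatibility conditions between different apartments (e.g.\ any two apartments sharing a chamber are isomorphic via a map fixing their intersection pointwise). Two ultralimit apartments come from two unrelated sequences $(A_n)$ and $(A_n')$, and the constancy of the Weyl group gives you no canonical isomorphism between their ultralimits; nothing ``transfers directly.'' The standard way around this (and the route taken in the literature) is to avoid the axioms altogether and invoke a metric characterization of spherical buildings in the style of Charney--Lytchak: a geodesically complete, finite-dimensional $\CAT(1)$ space in which every pair of points lies in a round sphere is a building or join. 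Without appealing to some such characterization, your forward direction is incomplete.

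The reverse direction is where the real gap lies. Your plan --- exhibit a round sphere of $Y^\omega$ inside the $\pi$-convex subset $e(Y)$ and apply \lemref{subbuilding} --- is exactly the argument this paper uses for part (2) of \lemref{buildings and joins}, but there the hypothesis that $Y$ contains a round sphere is \emph{assumed}; in the lemma under review it is not, and producing that sphere is precisely the hard point. Your proposed construction (``iteratively extend geodesics \dots\ take $\pi$-convex closure, with the dimension equality preventing the closure from extending beyond a $d$-sphere'') is not an argument: convex hulls in $\CAT(1)$ spaces are essentially uncontrollable, and the equality $\dim Y = \dim Y^\omega$ only bounds the dimension of whatever set you build --- it does not force a full-dimensional $\pi$-convex subset to be a round sphere (the building $Y^\omega$ itself is a full-dimensional $\pi$-convex subset that is not a sphere). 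What you actually need is either a construction of a top-dimensional apartment inside a geodesically complete, full-dimensional, $\pi$-convex subset of a building, or a strengthening of \lemref{subbuilding} in which ``contains a round sphere'' is replaced by ``geodesically complete of full dimension.'' As written, this direction restates what is to be proved.
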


The next lemma is similar to Corollary 11.4 and Lemma 11.5 of \cite{lytchak05}, but the hypotheses are different.

\begin{lemma}							\label{buildings and joins}
Let $Y$ be a complete $\CAT(1)$ space that contains a round sphere.
Then
\begin{enumerate}
\item
$Y$ is a spherical join if and only if $Y^\omega$ is a spherical join.
\item
$Y$ is a spherical building if and only if $Y^\omega$ is a spherical building.
\end{enumerate}
\end{lemma}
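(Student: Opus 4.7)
The plan is to handle each direction of each equivalence separately. Both ``if'' directions will follow by applying an earlier lemma to the $\pi$-convex subset $e(Y) \subset Y^\omega$, while the two ``only if'' directions require more direct arguments. A recurring point is that $e(K) = K^\omega$ is a round sphere of $Y^\omega$ (compactness of $K$ forces $K^\omega = e(K)$, and $\dim K = \dim Y = \dim Y^\omega$ by the preliminaries), so $e(Y)$ is a $\pi$-convex subset of $Y^\omega$ containing a round sphere of $Y^\omega$.

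For the ``if'' direction of (1), I would suppose $Y^\omega = A' * B'$ and apply \lemref{subjoin} to $e(Y) \subset Y^\omega$: this splits $e(Y)$ as the spherical join $(A' \cap e(Y)) * (B' \cap e(Y))$, and pulling back along the isometry $e$ gives a join decomposition of $Y$. The ``if'' direction of (2) is entirely analogous, using \lemref{subbuilding} in place of \lemref{subjoin}.

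For the ``only if'' direction of (1), suppose $Y = A * B$; I would verify directly that $Y^\omega = A^\omega * B^\omega$ by checking the three join axioms. The distance axiom is immediate, since for $(a_n) \in A^\omega$ and $(b_n) \in B^\omega$ we have $d((a_n),(b_n)) = \lim^\omega d(a_n,b_n) = \pi/2$; this also forces disjointness. For $\pi$-convexity of $A^\omega$, given two sequences in $A$ with $\omega$-almost-everywhere distance less than $\pi$, the $\pi$-convexity of $A$ in $Y$ yields geodesics in $A$, and their ultralimit is a geodesic in $A^\omega$; by uniqueness of geodesics at distance less than $\pi$ in the $\CAT(1)$ space $Y^\omega$, it coincides with the geodesic in $Y^\omega$. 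The same argument applies to $B^\omega$. The decomposition axiom follows by choosing, for each representative $(z_n)$ of a point $z \in Y^\omega$, points $a_n \in A$ and $b_n \in B$ satisfying $d(a_n,z_n) + d(z_n,b_n) = \tfrac{\pi}{2}$ in $Y$, and then taking ultralimits.

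For the ``only if'' direction of (2), every spherical building is geodesically complete (each geodesic extends within any containing round apartment), so $Y$ itself satisfies the hypotheses of \lemref{lytchak buildings}, which directly yields that $Y^\omega$ is a spherical building. I expect the most delicate step to be the $\pi$-convexity verification in the forward direction of (1), which needs care with ultralimits and $\omega$-almost-everywhere reasoning; everything else reduces cleanly to the cited lemmas.
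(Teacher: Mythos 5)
Your proposal is correct and follows essentially the same route as the paper: the two ``if'' directions via \lemref{subjoin} and \lemref{subbuilding} applied to the $\pi$-convex subset $e(Y)$ containing a round sphere, and the ``only if'' direction of (2) via geodesic completeness of buildings and \lemref{lytchak buildings}. The only difference is that you spell out the verification that $Y = A * B$ implies $Y^\omega = A^\omega * B^\omega$, which the paper dismisses as straightforward; your verification is sound.
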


\begin{proof}
First, it is straightforward to see that if $Y = A * B$ then $Y^\omega = A^\omega * B^\omega$.
Conversely, if $Y^\omega$ is a spherical join then the canonical embedding $e \colon Y \into Y^\omega$ has $e(Y)$ a $\pi$-convex subset containing a round sphere, because $\dim Y = \dim Y^\omega$; thus $Y$ is a spherical join by \lemref{subjoin}.
This proves item (1).
For (2), first assume $Y$ is a spherical building.
Then $Y$ is geodesically complete, so by \lemref{lytchak buildings}, $Y^\omega$ is a spherical building.
Now assume $Y^\omega$ is a spherical building.
Since $\dim Y = \dim Y^\omega$, by \lemref{subbuilding} we see that $Y$ is a spherical building.
\end{proof}

\begin{lemma} [cf.~ Lemma 11.6 of \cite{lytchak05}]		\label{involutive carry-over}
Let $Y$ be a complete $\CAT(1)$ space.
Assume $A \subset Y$ is a proper, closed, involutive subset of $Y$.
If $Y$ is essentially $\pi$-geodesically complete, then $A^\omega$ is a proper, closed, involutive subset of $Y^\omega$.
\end{lemma}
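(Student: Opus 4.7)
The plan is to verify each of the three properties—closed, proper, involutive—for $A^\omega$ in turn, with involutivity carrying all the real content. Closedness is already built into the definition of $A^\omega$ recalled in \secref{sec:prelims} and requires no use of essential $\pi$-geodesic completeness. For properness, I would pick $y \in Y \smallsetminus A$; since $A$ is closed, $r := d(y, A) > 0$, so for every $(a_n) \in A^\omega$ we get $d\bigl(e(y), (a_n)\bigr) = \lim^\omega d(y, a_n) \ge r > 0$. Thus $e(y) \notin A^\omega$, and $A^\omega$ is a proper subset of $Y^\omega$.

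The core of the argument is involutivity, and this is where essential $\pi$-geodesic completeness enters. Given $p \in A^\omega$ with representative $(a_n) \subset A$, and $q \in Y^\omega$ with $d(p,q) \ge \pi$ and representative $(q_n)$, I would construct for each $n$ a point $z_n \in A$ with $\lim^\omega d(q_n, z_n) = 0$, so that $(z_n) \in A^\omega$ represents $q$ and therefore $q \in A^\omega$. The construction splits by cases on $d(a_n, q_n)$: if $d(a_n, q_n) \ge \pi$, set $z_n = q_n$, and $z_n \in A$ directly by involutivity of $A$; if instead $0 < d(a_n, q_n) < \pi$ (which holds for $\omega$-a.e.\ $n$ anyway), apply essential $\pi$-geodesic completeness to $(a_n, q_n)$ with $\epsilon = 1/n$ to produce a length-$\pi$ geodesic $[a_n, z_n] \subset Y$ with $d(q_n, [a_n, z_n]) < 1/n$. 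In the second case, $z_n$ is an antipode of $a_n \in A$, so $z_n \in A$ as well, by involutivity.

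To verify $\lim^\omega d(q_n, z_n) = 0$, I would use a triangle-inequality estimate along $[a_n, z_n]$: if $y'_n$ is the nearest point on this geodesic to $q_n$, then $d(a_n, y'_n) \ge d(a_n, q_n) - 1/n$, and since the geodesic has length $\pi$, $d(y'_n, z_n) = \pi - d(a_n, y'_n) \le \pi - d(a_n, q_n) + 1/n$, whence $d(q_n, z_n) \le 2/n + \pi - d(a_n, q_n)$. Because $\lim^\omega d(a_n, q_n) \ge \pi$, the right-hand side is arbitrarily small for $\omega$-a.e.\ $n$, giving $\lim^\omega d(q_n, z_n) = 0$. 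The step I expect to need the most care is precisely this estimate—more specifically, the observation that the nearest point $y'_n$ on the approximating length-$\pi$ geodesic from $a_n$ automatically lies near the far endpoint $z_n$ rather than somewhere in the middle. This is the mechanism by which essential $\pi$-geodesic completeness substitutes for the full geodesic completeness hypothesis in the original \cite[Lemma~11.6]{lytchak05}: when $d(a_n, q_n)$ is close to $\pi$, the approximating antipode of $a_n$ it produces is forced to be close to $q_n$ itself, which is what allows the conclusion $q \in A^\omega$.
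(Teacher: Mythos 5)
Your proposal is correct and follows essentially the same route as the paper: closedness is definitional, properness comes from a point of $Y$ at positive distance from $A$, and involutivity is handled by the same case split on $d(a_n,q_n) \ge \pi$ versus $d(a_n,q_n) < \pi$, using essential $\pi$-geodesic completeness with $\epsilon = 1/n$ to produce antipodes $z_n \in A$ of $a_n$ converging to $q$. The only difference is that you spell out the triangle-inequality estimate showing $\lim^\omega d(q_n,z_n)=0$, which the paper leaves implicit.
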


\begin{proof}
Notice $A^\omega$ is closed for any subset $A \subset Y$.
Next we show $A^\omega$ is proper.
Since $A$ is proper and closed, we may find some $y_0 \in Y$ and some $\epsilon > 0$ such that the open $\epsilon$-ball about $y_0$ is contained in $Y \setminus A$.
Then the open $\epsilon$-ball about $(y_0) \in Y^\omega$ is contained in $Y^\omega \setminus A^\omega$.
Thus $A^\omega$ is a proper subset of $Y^\omega$.

Finally, we show $A^\omega$ is involutive.
Let $x \in A^\omega$, and let $y \in Y^\omega$ be an antipode of $x$ in $Y^\omega$.
Choose representative sequences $(x_n)$ and $(y_n)$ in $Y$ for $x$ and $y$, respectively.
We may assume each $x_n \in A$.
Our goal is to find a sequence $(z_n)$ in $Y$ such that $\omega$-almost every $z_n \in A$, and that $(z_n)$ also represents $y$ in $Y^\omega$.
Now if $d(x_n,y_n) \ge \pi$ for $\omega$-almost every $n$, then $\omega$-almost every $y_n \in A$ already, so assume $d(x_n,y_n) < \pi$ for $\omega$-almost every $n$.
We may then assume $d(x_n,y_n) < \pi$ for every $n$.
So for each $n$, find a geodesic $[x_n,z_n]$ in $Y$ of length $\pi$ such that $d(y_n, [x_n,z_n]) \le \frac{1}{n}$.
Because $\lim^\omega d(x_n,y_n) = \pi$, it follows that $\lim^\omega d(y_n, z_n) = 0$. And since $A$ is involutive, each $z_n \in A$.
Thus $A^\omega$ is involutive.
\end{proof}

\begin{theorem} [\thmref{main in intro}]			\label{main in body}
Let $Y$ be a complete $\CAT(1)$ space that contains a round sphere.
Assume $A \subset Y$ is a proper, closed, involutive subset of $Y$.
If $Y$ is essentially $\pi$-geodesically complete, then $Y$ is a spherical building or join.
\end{theorem}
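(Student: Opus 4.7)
The plan is to reduce the theorem to Lytchak's Theorem~\ref{lytchak rigidity} by passing to the ultraproduct $Y^\omega$, where all the needed hypotheses of that theorem will hold by the lemmas already established, and then transferring the conclusion back to $Y$.

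First, I would verify that $Y^\omega$ satisfies the hypotheses of Theorem~\ref{lytchak rigidity}. Since $Y$ is CAT(1), so is $Y^\omega$, and $Y^\omega$ is complete by the standard ultraproduct construction. Because $Y$ contains a round sphere $K$ with $\dim K = \dim Y$ and a round sphere is finite-dimensional by definition, $\dim Y$ is finite. The identity $\dim Y = \dim Y^\omega$ cited from \cite[Corollary~11.2]{lytchak05} then gives that $Y^\omega$ is finite-dimensional. Lemma~\ref{ultralimit g.c.} provides geodesic completeness of $Y^\omega$ from essential $\pi$-geodesic completeness of $Y$. So $Y^\omega$ is a complete, geodesically complete, finite-dimensional CAT(1) space.

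Next, I need a proper, closed, involutive subset of $Y^\omega$. This is exactly the content of Lemma~\ref{involutive carry-over}: $A^\omega$ is such a subset. Applying Theorem~\ref{lytchak rigidity} to $Y^\omega$ with the subset $A^\omega$, we conclude that $Y^\omega$ is a spherical building or join.

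Finally, I would transfer this conclusion back to $Y$. Since $Y$ contains a round sphere and is a complete CAT(1) space, Lemma~\ref{buildings and joins} applies: $Y$ is a spherical join if and only if $Y^\omega$ is, and similarly for spherical buildings. Thus $Y$ itself is a spherical building or join, completing the proof. There is no real obstacle here, since the technical work has been done in the preceding lemmas; the proof is essentially the assembly of Lemmas~\ref{ultralimit g.c.}, \ref{buildings and joins}, and \ref{involutive carry-over} with Theorem~\ref{lytchak rigidity}, and the only subtlety is ensuring finite-dimensionality, which follows automatically from the presence of a round sphere.
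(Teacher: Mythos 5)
Your proposal is correct and follows essentially the same route as the paper: pass to $Y^\omega$, apply \lemref{ultralimit g.c.} and \lemref{involutive carry-over} to verify the hypotheses of \thmref{lytchak rigidity} there, and transfer the conclusion back via \lemref{buildings and joins}. Your explicit check of finite-dimensionality via the round sphere and $\dim Y = \dim Y^\omega$ is a welcome bit of extra care that the paper leaves implicit.
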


\begin{proof}
By \lemref{ultralimit g.c.}, $Y^\omega$ is geodesically complete.
By \lemref{involutive carry-over}, $A^\omega$ is a proper, closed, involutive subset of $Y^\omega$.
By \thmref{lytchak rigidity}, $Y^\omega$ is a spherical building or join.
So by \lemref{buildings and joins}, $Y$ is a spherical building or join.
\end{proof}

\section{Essential $\pi$-geodesic completeness}
\label{sec:egc}

It is natural to wonder how often \thmref{main in body} applies.
We give one setting here.

\begin{theorem} [\thmref{one-dim in intro}]			\label{one-dim in body}
Let $Y$ be a complete $\CAT(1)$ space that contains a round sphere, and assume every pair of points in $Y$ is connected by a geodesic of length at most $\pi$.
If $\dim Y = 1$, then $Y$ is essentially $\pi$-geodesically complete.
\end{theorem}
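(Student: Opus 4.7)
The plan is to show that every point of $Y$ has at least two distinct directions, after which essential $\pi$-geodesic completeness will follow easily from iterated local extension and the standard $\CAT(1)$ fact that local geodesics of length at most $\pi$ are global geodesics.

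First, I would observe that $\dim Y = 1$ forces $\dim \Sigma_p Y = 0$ at every $p \in Y$. A zero-dimensional $\CAT(1)$ space cannot contain a non-constant geodesic segment, so any two of its distinct points lie at distance at least $\pi$; since angles in a space of directions are bounded above by $\pi$, every pair of distinct directions at any point meets at angle exactly $\pi$.

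The core step is to rule out \emph{dead ends}, meaning points $p$ with a unique direction. Suppose for contradiction that some $p \in Y$ has a unique direction $v$. Fix an antipode $y' \in K$ of $p$ via \cite[Lemma~3.1]{bl-centers} and let $\gamma := [p, y']$. Because every geodesic from $p$ begins in direction $v$, uniqueness of short geodesics in $\CAT(1)$ spaces shows that every geodesic from $p$ coincides with $\gamma$ on an initial segment $[p, q]$, where $q$ is the first branch point encountered along $\gamma$ (or $q = y'$ if $\gamma$ has no branch point). Writing $\ell := d(p, q)$, a brief tangency-branching analysis rules out any intersection of $K$ with the smooth interior $(p, q)$, so every geodesic from $p$ to a point $k \in K$ passes through $q$, whence $d(q, k) = d(p, k) - \ell \le \pi - \ell$ by the diameter hypothesis. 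If $\ell < \pi$, the existence of a $K$-antipode of $q$ from \cite[Lemma~3.1]{bl-centers} contradicts $\max_K d(q, \cdot) \le \pi - \ell < \pi$. If $\ell = \pi$, then $\gamma$ is smooth throughout and every geodesic from $p$ to a point of $K$ is a subsegment of $\gamma$, forcing $K$ to embed into the arc $\gamma$---impossible since $K$ is a circle.

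Given $x, y \in Y$ with $d(x,y) < \pi$ and $\epsilon > 0$, I would then start with the geodesic $[x, y]$ and, at each newly constructed endpoint, extend by picking any direction other than the incoming one (available since no dead ends exist, and meeting the incoming direction at angle exactly $\pi$ by the first step). Iterating to total length $\pi$ and applying the $\CAT(1)$ criterion produces a geodesic $[x, z]$ of length $\pi$ passing through $y$, so $d(y, [x, z]) = 0 < \epsilon$. I expect the main obstacle to be the tangency-branching analysis in the no-dead-ends step; the remainder is routine once every direction space has at least two points.
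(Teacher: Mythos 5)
There is a genuine gap in your no-dead-ends step, and it occurs in precisely the configuration that actually arises. Your dichotomy ``$\ell<\pi$ versus $\ell=\pi$'' silently assumes $\ell>0$, i.e.\ that all geodesics emanating from the dead end $p$ share a common initial segment of some definite positive length before any branching occurs: the contradiction you draw in the first case is $\max_K d(q,\cdot)\le \pi-\ell<\pi$, which is vacuous when $\ell=0$. But if $p$ is a dead end, branch points \emph{must} accumulate at $p$. Indeed, for small $s>0$ the point $\gamma(s)$ has an antipode $k_s\in K$ by \cite[Lemma~3.1]{bl-centers}, with $d(\gamma(s),k_s)=\pi$ exactly by the diameter hypothesis; the geodesic $[p,k_s]$ starts in the unique direction at $p$, hence coincides with $\gamma$ initially, and if it coincided up to parameter $s$ we would get $d(\gamma(s),k_s)=d(p,k_s)-s\le\pi-s<\pi$, a contradiction. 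So $[p,k_s]$ separates from $\gamma$ before parameter $s$, forcing $\ell=0$. Thus the only case compatible with a dead end is the one your argument does not address, and ``no dead ends'' remains unproven. (The ``tangency-branching analysis'' ruling out $K\cap(p,q)$ is also left unsubstantiated, but the $\ell=0$ issue is the fatal one.) Since the final extension step relies on every point having a second direction, and since it would deliver $d(y,[x,z])=0$ --- full $\pi$-geodesic completeness, strictly stronger than the stated theorem --- the proposal as written does not go through.

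The paper's proof is instructive here because it turns exactly this accumulation phenomenon to its advantage rather than trying to rule it out. Assuming $[x,y]$ does not extend to length $\pi$, it shows (by the same antipode-in-$K$ counting you attempt) that the long geodesics $[p,y]$ with $p\in A\subset K$ cannot all share a common terminal segment $[q,y]$ of length $\ge\epsilon$; it then picks $p\in A$ whose geodesic $[p,y]$ separates from $[x,y]$ at some $q$ with $0<d(q,y)<\epsilon$, and observes that $[x,q]*[q,p]$ is a local geodesic of length at least $\pi$ (using that distinct directions in a one-dimensional space meet at angle $\pi$). Its initial segment of length $\pi$ is then a genuine geodesic passing within $\epsilon$ of $y$, which is all that essential $\pi$-geodesic completeness requires. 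If you want to salvage your approach, you would need either to handle the $\ell=0$ case directly or to abandon the no-dead-ends claim and argue, as the paper does, with branch points arbitrarily close to $y$.
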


\begin{proof}
Assume $\dim Y = 1$.
Let $[x,y]$ be a geodesic in $Y$ that cannot be extended to a geodesic $[x,z]$ of length $\pi$ in $Y$.
Then for every geodesic $[p,y]$ in $Y$ ending at $y$, the geodesics $[x,y]$ and $[p,y]$ must coincide on a subsegment $[q,y]$ of some positive length.
Now let $K$ be a round sphere in $Y$, and set $\alpha = \frac{1}{3} d(x,y)$ and $A = \setp{p \in K}{d(y,p) \ge \pi - \alpha}$.
By \cite[Lemma~ 3.1]{bl-centers}, $A$ is not empty.
By hypothesis on $Y$, for each $p \in A$ we have a geodesic $[y,p]$ in $Y$ of length $d(y,p) \in [\pi - \alpha, \pi]$.

We claim there is no $q \neq p \in Y$ such that for every $p \in A$, the geodesics $[x,y]$ and $[p,y]$ share the geodesic $[q,y]$ as a subsegment.
For suppose $q$ is such a point.
Let $\delta = d(q,y) > 0$; we may assume $\delta \le \alpha$.
Then every point $p \in K \setminus A$ satisfies $d(p,q) \le d(p,y) + d(y,q) < (\pi - \alpha) + \delta \le \pi$, and meanwhile every point $p \in A \subset K$ satisfies $d(p,q) \le \pi - \delta < \pi$ by hypothesis on $q$.
Hence every point $p \in K$ satisfies $d(p,q) < \pi$, contradicting \cite[Lemma~ 3.1]{bl-centers}.
Therefore, no such $q$ exists.

Now let $\epsilon > 0$, and assume $\epsilon \le \alpha$.
By the previous paragraph, there is some $p \in A$ such that the geodesics $[x,y]$ and $[p,y]$ in $Y$ do not share a subsegment of length $\ge \epsilon$.
But by the first paragraph, $[x,y]$ and $[p,y]$ \emph{do} share a subsegment $[q,y]$ of positive length.
So the concatenated path $[x,q] * [q,p]$ is locally geodesic in $Y$ and length $\ge 2 \alpha + (\pi - \alpha - \epsilon) \ge \pi$.
Thus the initial subsegment $[x,z]$ of $[x,q] * [q,p]$ of length $\pi$ is geodesic, and $d(y, [x, z]) < \epsilon$.
This proves the proposition.
\end{proof}

\begin{corollary}						\label{one-dim cor}
Let $X$ be a proper, cocompact, geodesically complete $\CAT(0)$ space.
Assume the Tits boundary, $\bdT X$, of $X$ contains a proper, Tits-closed, involutive subset.
If $\dim \bdT X = 1$ and $\diam \bdT X = \pi$, then $X$ is a symmetric space or Euclidean building, or is a nontrivial product.
\end{corollary}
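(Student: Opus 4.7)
The plan is to derive this from \corref{main cor}. Of the hypotheses of that corollary, properness, cocompactness, and geodesic completeness of $X$ are given, as is the existence of a proper, Tits-closed, involutive subset of $\bdT X$. It therefore suffices to show that $\bdT X$ is essentially $\pi$-geodesically complete.

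For this I would apply \thmref{one-dim in body}. Since $X$ is proper and cocompact, $\bdT X$ is a complete $\CAT(1)$ space, and it contains a round sphere by Kleiner's Theorem~C (as already noted in the preliminaries). The hypothesis $\dim \bdT X = 1$ is given. The only item in \thmref{one-dim in body} that is not immediate is that every pair of points in $\bdT X$ is connected by a Tits geodesic of length at most $\pi$; for pairs at Tits distance strictly less than $\pi$ this follows from the $\pi$-geodesic property of $\CAT(1)$ spaces, and by the hypothesis $\diam \bdT X = \pi$ the only remaining case is that of pairs at Tits distance exactly $\pi$.

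The main obstacle is producing a Tits geodesic of length exactly $\pi$ for antipodal pairs $p, q \in \bdT X$. The route I would take uses the geodesic completeness and cocompactness of $X$: these imply that two boundary points at angular distance $\pi$ are the endpoints of a bi-infinite geodesic line in $X$, and under the standing hypotheses the associated asymptotic directions trace out a Tits segment of length $\pi$ from $p$ to $q$. Alternatively, one can approximate $q$ by points $q_n \to q$ with $d_T(p, q_n) < \pi$, take the $\CAT(1)$ geodesics $[p,q_n] \subset \bdT X$ of length $d_T(p,q_n)$, and pass to a subsequential limit via an Arzel\`a--Ascoli argument in the complete metric space $\bdT X$, producing a Tits geodesic $[p,q]$ of length $\pi$. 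Once \thmref{one-dim in body} applies, \corref{main cor} delivers the stated structural conclusion.
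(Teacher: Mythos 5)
Your reduction is the intended one: the paper gives no separate argument for this corollary, and the expected proof is exactly to feed \thmref{one-dim in body} into \corref{main cor}, with properness and cocompactness supplying completeness of $\bdT X$ and the round sphere via Kleiner's Theorem~C. You have also correctly isolated the only point that requires work, namely that two points of $\bdT X$ at Tits distance exactly $\pi$ are joined by a Tits geodesic. However, neither of the two justifications you offer for that step holds up. For the first: even when two boundary points $\xi,\eta$ are the endpoints of a geodesic line in $X$, this yields only $\dT(\xi,\eta)\ge\pi$ and produces no Tits \emph{path} between them at all --- the two endpoints of any line in $\mathbb{H}^2$ are at infinite Tits distance --- unless the line bounds a flat half-plane, which nothing here provides; moreover, the existence of such a connecting line is standard only for pairs at angle strictly greater than $\pi$, not equal to $\pi$. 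For the second: an Arzel\`a--Ascoli argument requires some compactness, and $(\bdT X,\dT)$ is complete but in general not locally compact (the Tits boundary of $\mathbb{H}^2\times\mathbb{H}^2$ already fails local compactness at the suspension points), so a subsequential limit of the geodesics $[p,q_n]$ need not exist in the Tits metric.

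The step can be repaired, but by working in the cone topology rather than in the Tits metric. Given $\xi,\eta$ with $\dT(\xi,\eta)=\pi$, choose paths $c_k\colon[0,1]\to\bdT X$ from $\xi$ to $\eta$ of Tits length at most $\pi+\tfrac1k$, parametrized proportionally to arc length. Since $X$ is proper, $\bd X$ is compact in the cone topology, so after a diagonal extraction the maps $c_k$ converge pointwise on a dense set of parameters; lower semicontinuity of the Tits metric under cone convergence then gives $\dT(c(s),c(t))\le\pi\abs{s-t}$ for the limit $c$, which therefore extends to a $\pi$-Lipschitz path from $\xi$ to $\eta$ realizing $\dT(\xi,\eta)=\pi$, i.e.\ the desired geodesic. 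With that in place, \thmref{one-dim in body} applies and \corref{main cor} finishes the proof as you say.
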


We conjecture that \thmref{one-dim in body} is not restricted to dimension one.

\begin{conjecture}						\label{conj:egc}
Let $Y$ be a complete $\CAT(1)$ space that contains a round sphere, and assume every pair of points in $Y$ is connected by a geodesic of length at most $\pi$.
If $Y$ is not essentially $\pi$-geodesically complete, then $Y$ is a spherical join.
\end{conjecture}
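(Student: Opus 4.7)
My plan is to reduce the conjecture to \thmref{main in body} by constructing, from the failure of essential $\pi$-geodesic completeness, a proper, closed, involutive subset of $Y$. Given such a subset, \thmref{main in body} forces $Y$ to be a spherical building or join; since spherical buildings are geodesically complete, hence essentially $\pi$-geodesically complete, $Y$ must in fact be a spherical join.

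To construct the involutive subset, I would follow the strategy of \thmref{one-dim in body}. Let $x, y \in Y$ and $\epsilon_0 > 0$ witness the failure of essential $\pi$-geodesic completeness: $d(x,y) < \pi$ and every geodesic $[x,z] \subset Y$ of length $\pi$ satisfies $d(y, [x,z]) \ge \epsilon_0$. Let $K \subset Y$ be a round sphere, set $\alpha = \frac{1}{3}d(x,y)$, and let $A = \setp{p \in K}{d(y,p) \ge \pi - \alpha}$; the $\pi$-connectedness hypothesis produces, for each $p \in A$, a geodesic $[y,p] \subset Y$ of length at least $\pi - \alpha$. In dimension one, the key obstruction was that $[x,y]$ and $[y,p]$ must share a subsegment ending at $y$. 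The natural higher-dimensional replacement is a statement in the space of directions $\susp_y Y$: the incoming direction $v$ of $[x,y]$ at $y$ cannot be antipodal in $\susp_y Y$ to the outgoing direction $w_p$ of $[y,p]$ for any $p \in A$, for otherwise the concatenation of $[x,y]$ with $[y,p]$ would be locally geodesic at $y$ and, one hopes, would yield a genuine extension of $[x,y]$ approximating $y$ to within $\epsilon_0$.

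One would then exploit the richness of $A$ (as $\alpha \to 0$, $A$ fills out a neighborhood of a great subsphere of $K$) to force $\susp_y Y$, and ultimately $Y$ itself, to split as a spherical join with $v$ in one factor and the outgoing directions $w_p$ in the other. From this splitting, one of the two factors, or else the Tits-closed set of points reachable from $y$ by geodesics in directions avoiding $v$, should be the desired proper, closed, involutive subset of $Y$, at which point \thmref{main in body} applies to finish.

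The principal obstacle is this final globalization step: converting the antipode-avoidance information in $\susp_y Y$ into a genuine join splitting of $Y$, and extracting from it an involutive subset. In dimension one this is trivial because $\susp_y Y$ is discrete. In higher dimensions one seems to need either a Lytchak-style structure theorem for $\CAT(1)$ spaces with restricted antipode sets in their spaces of directions, or an extension of the ultraproduct machinery of \lemref{ultralimit g.c.} and \lemref{involutive carry-over} that produces an involutive subset of $Y^\omega$ without requiring $Y^\omega$ to be geodesically complete (so that \thmref{lytchak rigidity} no longer applies as a black box). Either route appears to require substantial new ideas beyond the arguments of the present paper, which is presumably why the statement is conjectural.
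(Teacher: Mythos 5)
The statement you are addressing is left open in the paper: it appears only as \conjref{conj:egc}, with no proof supplied, so there is no argument of the author's to compare yours against. Your proposal is candid that it is a sketch rather than a proof, and you correctly identify the hard missing step (globalizing the antipode-avoidance information in $\susp_y Y$ to a join splitting of $Y$). However, there is a more basic logical problem with the reduction you set up in your first paragraph, independent of that admitted gap.

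\thmref{main in body} concludes that $Y$ is a spherical building or join only under the hypothesis that $Y$ \emph{is} essentially $\pi$-geodesically complete, and \conjref{conj:egc} assumes precisely that $Y$ is \emph{not}. So even if you succeed in manufacturing a proper, closed, involutive subset of $Y$ from the failure of extendibility, \thmref{main in body} cannot be invoked: its key hypothesis is negated by assumption, and the same is true of \thmref{lytchak rigidity}, whose geodesic completeness hypothesis also fails. The entire mechanism of \secref{sec:proof} runs through \lemref{ultralimit g.c.} and \lemref{involutive carry-over}, both of which consume essential $\pi$-geodesic completeness to make $Y^\omega$ geodesically complete and to push the involutive set into $Y^\omega$; with that hypothesis gone, none of the paper's rigidity machinery applies, and the proposed reduction is circular. (You even acknowledge this tension at the end, but the first paragraph presents the reduction as if it were already a valid outer shell needing only the involutive set to be filled in; it is not.) A genuine proof would have to extract the join splitting \emph{directly} from the non-extendable geodesic $[x,y]$ --- for instance, by showing that the incoming direction of $[x,y]$ at $y$ uniformly fails to have antipodes in $\susp_y Y$ among the directions toward the set $A \subset K$, deducing a splitting of $\susp_y Y$, and then globalizing, as the model case $Y = S^0 * Z$ discussed after the conjecture suggests --- rather than by routing through an involutive subset and the theorems of this paper, all of which presuppose some form of geodesic completeness that the conjecture's hypothesis explicitly denies.
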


We remark that the spherical join of $S^0$ (two points, distance $\pi$ apart) with \emph{any} complete $\CAT(1)$ space containing a round sphere will satisfy all the hypotheses of \conjref{conj:egc}, including the failure of essential $\pi$-geodesic completeness.
Hence the possibility of a spherical join cannot be excluded here, as it is in \thmref{one-dim in body}.

Note also if \conjref{conj:egc} holds, then \corref{one-dim cor} applies in every dimension.

\bibliographystyle{amsplain}
\bibliography{refs}

\end{document}